\newtheorem{thm}{Theorem}[section]
\newtheorem{cor}[thm]{Corollary}
\newtheorem{lem}[thm]{Lemma}
\theoremstyle{definition}
\theoremstyle{remark}
\begin{document}
\title[On some geometric properties of quasi-product production models]{On some geometric properties of quasi-product production models}
\author[H. Alodan, B.-Y. Chen, S. Deshmukh, G.E. V\^{\i}lcu]{Haila Alodan, Bang-Yen Chen, Sharief Deshmukh, Gabriel-Eduard V\^{\i}lcu}

\date{}
\maketitle

\abstract  In this article we obtain classification results on the quasi-product production functions in terms of the geometry of their associated graph hypersurfaces
, generalizing in a new setting some recent results concerning basic production models. 
In particular, we obtain several results on the geometry of Spillman-Mitscherlich and transcendental production functions.
\\ \\
{\bf Keywords:} quasi-product production function, constant return to scale,  marginal rate of substitution, constant elasticity of substitution, production hypersurface, Gauss-Kronecker curvature.\\ \\
{\bf 2010 Mathematics Subject Classification:} 53A07, 91B02, 91B15.\\
\endabstract

\section{Introduction}

The concept of \emph{production function} is a basic one in economics,  being used in the modeling of the relationship between the output and the inputs of a production process.
From a mathematical point view, a production function is a twice differentiable mapping $f$ from a domain
$D$ of $\mathbb{R}^n_+=\{(x_1,\ldots,x_n)\in\mathbb{R}^n:x_1>0,\ldots,x_n>0\}$ into $\mathbb{R}_+=\{x\in\mathbb{R}:x>0\}$, where $\mathbb{R}$ denotes the set of real numbers. Hence we have $f:D\subset\mathbb{R}^n_+\rightarrow\mathbb{R}_+$, $f=f(x_1,\ldots,x_n)$,
where $f$ is the quantity of output, $n$ is the number of the inputs
and $x_1,\ldots,x_n$ are the factor inputs, such as: labor, capital,
land, raw materials etc. We note that some historical information about the evolution of the concept of production function and a lot of interesting examples can be found in \cite{BHA}. We only recall that, among the classes of production models, the most famous is the Cobb-Douglas (CD) production function introduced in \cite{CD} in order to describe the distribution of the national income of the USA. 
In its generalized form with $n$ inputs, the CD production function is expressed by  \cite{WF}
\begin{equation}\label{2}
f(x_1,...,x_n)=A\cdot
\prod_{i=1}^n x_i^{k_{i}},
\end{equation}
where $A>0$, $k_1,\ldots,k_n\neq0$. It is obvious that the function $f$ given by \eqref{2}, which is usually called the generalized CD production function, is homogeneous of degree $p=\displaystyle\sum_{i=1}^n k_{i}$. We recall that the homogeneity has a precise economic interpretation: 
the multiplication of the inputs by same value $\lambda>0$ leads to a multiplication of the production  by $\lambda^p$, where $p$ denotes the degree of homogeneity. It is known that the production function exhibits \emph{constant return to scale}, shortly CRS, if the degree of homogeneity is $p=1$. Similarly, an \emph{increased return to scale} (\emph{decreased return to scale}) occurs when the degree of homogeneity is $p>1$ ($p<1$).

CD production functions were generalized by H. Uzawa \cite{UZ} and D.
McFadden \cite{MCF} under the form
\begin{equation}\label{999}
f(x_1,...,x_n)=A\left(\sum_{i=1}^{n}k_ix_i^\rho\right)^{\frac{\gamma}{\rho}},\ (x_1,\ldots,x_n)\in D\subset \mathbb{R}^n_+,
\end{equation}
with $A,k_1,\ldots,k_n,\rho\neq0$, where $\gamma$ is the degree of homogeneity. We note that the function $f$ defined by \eqref{999} is called the generalized CES production function.

It is well known that the classical treatment of the production functions makes use of the projections of production functions on a plane, but, unfortunately, this approach leads to limited conclusions and a differential geometric treatment is more than useful. We note that this approach is feasible since any production function $f$ can be identified with the graph of $f$, \emph{i.e.} the nonparametric hypersurface of the $(n+1)$-dimensional
Euclidean space $\mathbb{E}^{n+1}$ defined by
\begin{equation}\label{3}
L(x_1,\ldots,x_n)=(x_1,\ldots,x_n,f(x_1,\ldots,x_n))
\end{equation}
and called the \emph{production hypersurface} of $f$ \cite{VV}. Using this treatment, a surprising link between some basic concepts in the theory of production functions and the differential geometry of hypersurfaces was obtained in \cite{VGE}: a
generalized CD production function has
decreasing/increasing return to scale if and only if the
corresponding hypersurface has positive/negative Gauss-Kronecker curvature.
Moreover, this production function has constant return to scale if
and only if the corresponding hypersurface has vanishing Gauss-Kronecker
curvature. Moreover, in \cite{CV}, the authors proved that a homogeneous production function with
an arbitrary number of inputs defines a flat hypersurface if and only if either it has
constant return to scale or it is a multinomial production function. This result was generalized by X. Wang to the case of homogeneous hypersurfaces with
constant sectional curvature \cite{W}. On the other hand, other classes of production functions, like quasi-sum production functions and homothetic production functions, were investigated via geometric properties of their associated graph hypersurfaces in
Euclidean spaces (see, e.g., \cite{AMih,CH4,CH2} and references therein). We outline that such kind of results are of great interest not only in economic analysis \cite{AOT,INO1}, but also in the classical differential geometry, where the study of hypersurfaces
with certain curvature properties is one of the basic problems \cite{CH7}.

Motivated by the above works, in the present paper we derive the main properties of quasi-product production models in economics
in terms of the geometry of their graph hypersurfaces
, generalizing in a new setting some recent results concerning quasi-sum and homothetic
production models \cite{CH4,CH2,VV3}.

\section{Preliminaries on the geometry of hypersurfaces}


In this section we recall some basic concepts and results concerning the geometry of hypersurfaces in Euclidean spaces, based mainly on \cite{CH7,CH1}.


Let $M$ be a hypersurface of the Euclidean space $\mathbb E^{n+1}$. Then it is known that the \emph{shape operator} $S$ of $M$, which can be defined  using the Gauss map of the hypersurface, is a symmetric endomorphism of the tangent space $T_pM$, for each $p\in M$, playing a key role in the differential geometry of hypersurfaces (see, e.g., the recent papers \cite{DeepA,LiuY,UpadT}). The eigenvalues $\rho_1,\ldots,\rho_n$ of the shape
operator are called \emph{principal curvatures}. The determinant of the shape operator
$S$, denoted by $K$, is called the \emph{Gauss-Kronecker curvature}. When $n = 2$, the
Gauss-Kronecker curvature is simply called the \emph{Gauss curvature}. We recall that a \emph{developable surface} is a surface having null Gauss curvature.  
On another  hand, the \emph{mean curvature} of $M$, denoted by $H$, is defined to be
the average of the principle curvatures, i.e.,
\[
H=\frac{1}{n}\sum_{i=1}^{n}\rho_i.
\]
A \emph{minimal} hypersurface is a hypersurface with vanishing mean curvature.

We recall that the Riemann curvature tensor $R$ of $M$
assigns to three vector fields $(u,v,w)$ on $M$ the vector field
\[
R(u,v)w=\nabla_u\nabla_vw-\nabla_v\nabla_uw-\nabla_{[u,v]}w,
\]
where $\nabla$ is the Levi-Civita connection associated with the induced metric $g$ on the hypersurface from the Euclidean metric on $\mathbb E^{n+1}$.
We say that $M$ is \emph{flat} if its curvature
tensor $R$ is zero at every point.

Next, we denote the partial derivatives $\frac{\partial f}{\partial x_i}$, $\frac{\partial^2 f}{\partial x_i\partial x_j}$,$\ldots$, etc. by $f_{x_i},f_{x_ix_j}$,$\ldots$, etc. We also put
\[
w=\sqrt{1+\displaystyle\sum_{i=1}^{n} f_{x_i}^{2}}.
\]

Next we recall the following well-known result for later use.

\begin{lem}\label{L1} \cite{CH4}
For the production hypersurface of $\mathbb E^{n+1}$ defined by (\ref{3}), one has the following.
\begin{enumerate}
  \item[i.] The Gauss-Kronecker curvature $K$ is given by
      \begin{equation}\label{11}
     K=\frac{\det(f_{x_ix_j})}{w^{n+2}}.
      \end{equation}
  \item[ii.] The mean curvature $H$ is given by
       \begin{equation}\label{11b}
     H=\frac{1}{n}\sum_{i=1}^{n}\frac{\partial}{\partial x_i}\left(\frac{f_{x_i}}{w}\right).
      \end{equation}
  \item[iii.] The sectional curvature $K_{ij}$ of the plane section spanned by $\frac{\partial}{\partial x_i}$, $\frac{\partial}{\partial x_j}$ is
        \begin{equation}\label{12}
      K_{ij}=\frac{f_{x_ix_i}f_{x_jx_j}-f^2_{x_ix_j}}{w^2\left(1+f_{x_i}^2+f_{x_j}^2\right)}.
      \end{equation}
    \item[iv.] The Riemann curvature tensor $R$ and the metric tensor $g$ satisfy
 \begin{equation}\label{13}
 g\left(R\left(\frac{\partial}{\partial x_{i}},\frac{\partial}{\partial x_{j}}\right)\frac{\partial}{\partial x_{k}},\frac{\partial}{\partial x_{\ell}}\right) =\frac{f_{x_ix_{\ell}}f_{x_jx_k}- f_{x_ix_k}f_{x_jx_{\ell}}}{w^{4}}.
\end{equation}
\end{enumerate}
\end{lem}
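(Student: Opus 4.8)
The statement collects the standard expressions for the fundamental invariants of a graph hypersurface, so the plan is a direct computation organized around the first and second fundamental forms. First I would record the coordinate tangent frame $L_{x_i}=(0,\dots,1,\dots,0,f_{x_i})$, with the $1$ in the $i$-th slot, from which the induced metric is $g_{ij}=\langle L_{x_i},L_{x_j}\rangle=\delta_{ij}+f_{x_i}f_{x_j}$ and the unit normal is $\nu=\frac{1}{w}(-f_{x_1},\dots,-f_{x_n},1)$. Since $L_{x_ix_j}=(0,\dots,0,f_{x_ix_j})$ is purely vertical, the components of the second fundamental form are immediately $h_{ij}=\langle L_{x_ix_j},\nu\rangle=\frac{f_{x_ix_j}}{w}$. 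With these three ingredients every item follows mechanically.

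For (i) I would use that the Gauss-Kronecker curvature equals $K=\det(h_{ij})/\det(g_{ij})$. The numerator factors as $\det(f_{x_ix_j})/w^{n}$ by extracting $1/w$ from each of the $n$ rows, and the denominator is evaluated by the rank-one update (matrix determinant) identity $\det(I+vv^{\top})=1+|v|^{2}$ with $v=(f_{x_1},\dots,f_{x_n})$, giving $\det(g_{ij})=1+\sum_i f_{x_i}^{2}=w^{2}$. Dividing yields (\ref{11}). I note in passing that this same computation, specialised to a $2\times 2$ block, produces the area element $g_{ii}g_{jj}-g_{ij}^{2}=1+f_{x_i}^{2}+f_{x_j}^{2}$ that reappears in (\ref{12}).

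For (ii) I would compute the mean curvature as $H=\frac{1}{n}\,\mathrm{tr}(g^{-1}h)$. The inverse metric comes from Sherman--Morrison, $g^{ij}=\delta_{ij}-f_{x_i}f_{x_j}/w^{2}$, so that $\mathrm{tr}(g^{-1}h)=\frac{1}{w}\sum_i f_{x_ix_i}-\frac{1}{w^{3}}\sum_{i,j}f_{x_i}f_{x_j}f_{x_ix_j}$. The point is then to recognise this as a divergence: differentiating $f_{x_i}/w$ and using $w_{x_i}=\frac{1}{w}\sum_j f_{x_j}f_{x_ix_j}$ reproduces exactly those two terms, whence $\mathrm{tr}(g^{-1}h)=\sum_i\partial_{x_i}(f_{x_i}/w)$ and (\ref{11b}) follows. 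This matching of the intrinsic trace with the divergence form is the only genuinely computational step, and is where I would be most careful with the bookkeeping of the mixed third-order terms; it is the main obstacle of the proof.

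Finally, (iii) and (iv) both flow from the Gauss equation, which for a hypersurface of the flat space $\mathbb{E}^{n+1}$ expresses the intrinsic curvature entirely through the second fundamental form, namely $g\!\left(R(\partial_{x_i},\partial_{x_j})\partial_{x_k},\partial_{x_\ell}\right)=h_{i\ell}h_{jk}-h_{ik}h_{j\ell}$. Substituting $h_{ij}=f_{x_ix_j}/w$ then gives (\ref{13}). For (iii) I would simply specialise to $k=j$, $\ell=i$ and divide by the coordinate $2$-plane area element $g_{ii}g_{jj}-g_{ij}^{2}=1+f_{x_i}^{2}+f_{x_j}^{2}$ computed in (i), which produces (\ref{12}); in particular (iii) is just the sectional-curvature specialisation of (iv). No serious difficulty arises here, since everything is algebraic once the Gauss equation has been invoked.
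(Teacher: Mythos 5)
Your strategy is the standard one that the paper's citation to \cite{CH4} points to (the paper itself gives no proof of this lemma): coordinate frame, induced metric $g_{ij}=\delta_{ij}+f_{x_i}f_{x_j}$, unit normal, scalar second fundamental form $h_{ij}=f_{x_ix_j}/w$, then $K=\det(h_{ij})/\det(g_{ij})$, $H=\frac{1}{n}\mathrm{tr}(g^{-1}h)$, and the Gauss equation. Your parts (i) and (ii) are complete and correct: the rank-one determinant identity gives $\det(g_{ij})=w^2$, Sherman--Morrison gives $g^{ij}=\delta_{ij}-f_{x_i}f_{x_j}/w^2$, and your matching of $\mathrm{tr}(g^{-1}h)$ with the divergence $\sum_i\partial_{x_i}(f_{x_i}/w)$ is exactly right.

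There is, however, a genuine failure at the final step. Substituting $h_{ij}=f_{x_ix_j}/w$ into the Gauss equation yields
\[
g\left(R\left(\tfrac{\partial}{\partial x_i},\tfrac{\partial}{\partial x_j}\right)\tfrac{\partial}{\partial x_k},\tfrac{\partial}{\partial x_\ell}\right)=h_{i\ell}h_{jk}-h_{ik}h_{j\ell}
=\frac{f_{x_ix_\ell}f_{x_jx_k}-f_{x_ix_k}f_{x_jx_\ell}}{w^{2}},
\]
with denominator $w^{2}$, not the $w^{4}$ printed in (\ref{13}), so your claim that this substitution ``gives (\ref{13})'' is false as to the power of $w$. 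Worse, your derivation of (iii) tacitly uses the $w^{2}$ version: dividing the displayed expression (with $k=j$, $\ell=i$) by $g_{ii}g_{jj}-g_{ij}^2=1+f_{x_i}^2+f_{x_j}^2$ produces exactly (\ref{12}), whereas dividing the printed (\ref{13}) would leave a spurious factor $w^{-2}$; as written, your proposal asserts that (\ref{12}) and (\ref{13}) follow from the same substitution, which is impossible since they are mutually inconsistent. A quick sanity check at $n=2$ makes this vivid: the printed (\ref{13}) would force the Gauss curvature to be $(f_{x_1x_1}f_{x_2x_2}-f_{x_1x_2}^2)/w^{6}$, contradicting the classical value $(f_{x_1x_1}f_{x_2x_2}-f_{x_1x_2}^2)/w^{4}$ recovered from part (i). A direct computation with the Christoffel symbols $\Gamma^k_{ij}=f_{x_ix_j}f_{x_k}/w^{2}$ of the graph metric independently confirms the exponent $2$. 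The $w^{4}$ in (\ref{13}) --- a slip carried over from the quoted source --- is what one obtains by feeding the coefficients $f_{x_ix_j}/w^{2}$ of the non-unit normal $(-f_{x_1},\ldots,-f_{x_n},1)$ into the Gauss equation while forgetting the length factor $w^{2}$. A careful write-up must either prove the $w^{2}$ formula and flag the misprint, or exhibit the nonstandard normalization under which (\ref{13}) would be literally true; you did neither, and the bookkeeping you promised to be ``most careful'' about in (ii) is precisely what is missing in (iv).
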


\section{Quasi-product production models}

Two classes of production functions have been investigated more thoroughly in economics, namely homogeneous and homothetic production functions. Various geometric properties of these production models were obtained in the last period of time by many geometers, but there are some non-homogeneous production functions, including the famous Spillman-Mitscherlich and transcendental production functions, which were not enough investigated from a differential geometric point of view.

We recall that the Spillman-Mitscherlich production function is defined by
\begin{equation}\label{A1}
f(x_1,\ldots,x_n)= A\cdot\left[1-\exp(-a_1x_1)\right]\cdot\ldots\cdot\left[1-\exp(-a_nx_n)\right],
\end{equation}
where $A,a_1,\ldots,a_n$ are positive constants. On the other hand, the transcendental production function is given by
\begin{equation}\label{A2}
f(x_1,\ldots,x_n)= A\cdot x_{1}^{a_1}\exp(b_1x_1)\cdot\ldots\cdot x_{n}^{a_n}\exp(b_nx_n),
\end{equation}
where $A$ is a positive constant and $a_1,b_1,\ldots,a_n,b_n$ are real constants (usually taken from the closed interval $[0,1]$), such that
\[a_i^2+b_i^2\neq 0,\ i=1,\ldots,n.\]

We remark that the Spillman-Mitscherlich and transcendental production functions belong to a more general class of production functions, namely of the form
\begin{equation}\label{A5}
f(x_1,\ldots,x_n)=\prod_{i=1}^{n}g_i(x_i),
\end{equation}
where $g_1,\ldots,g_n$ are continuous positive real functions  with nowhere zero first derivatives. We say that a production function of
the form \eqref{A5} is a \emph{product} production function. In particular, we note that the generalized CD production function
is also a particular type of product production function. Notice that the graph hypersurfaces associated with the product functions having the form \eqref{A5} are of interest not only in economic analysis, but also in classical differential geometry, where they were investigated under the name of factorable hypersurfaces or homothetical hypersurfaces \cite{JIU,LIU,LM,VW}.

Next, we remark that the concept of product production function can be also generalized as follows.
We say that a production model is \emph{quasi-product}, if it is given by
\begin{equation}\label{A6}
f(x_1,\ldots,x_n)=F\left(\prod_{i=1}^{n}g_i(x_i)\right),
\end{equation}
where $F,g_1,\ldots,g_n$ are continuous
positive functions with nowhere zero first derivatives. It is clear that the class of quasi-product production models reduces to the class of product production models, provided that $F$ is the identity function. Moreover, it is to see that the generalized
CES production function is also a particular type of quasi-product production model. We notice that quasi-product productions models which possed flat production hypersurfaces were recently classified by Y. Fu and W.G. Wang in three classes (see \cite[Theorem 3.3]{FW}).

We recall that, if $f$ is a production function with $n$ inputs $x_1,x_2,...,x_n$, $n\geq 2$, then the \emph{output elasticity}, also called elasticity of production  (or output), with respect to an input $x_i$, is given by
\begin{equation}\label{B4}
E_{x_i}=\frac{x_i}{f}f_{x_i},
\end{equation}
while the \emph{marginal rate of technical substitution} between two inputs $x_j$
and $x_i$ is defined as
\begin{equation}\label{B5}
{\rm MRS}_{ij}=\frac{f_{x_j}}{f_{x_i}}.
\end{equation}

A production function is said to satisfy the \emph{proportional marginal rate of substitution} (PMRS) property if and only if
\begin{equation}\label{B6}
{\rm MRS}_{ij}=\frac{x_i}{x_j},\ 1\leq i\neq j\leq n.
\end{equation}

On another hand, it is well known that the most common quantitative indices of production factor substitutability are
forms of the elasticity of substitution \cite{MCF}. We recall that there are two concepts of elasticity of substitution: Hicks elasticity of substitution, denoted by $H_{ij}$ and Allen elasticity of substitution, denoted by $A_{ij}$, $i,j\in\{1,\ldots,n\}$, $i\neq j$. The function $H_{ij}:\mathbb{R}^n_+\rightarrow\mathbb{R}$ defined  by
\begin{equation}\label{B7}
H_{ij}(x_1,\ldots,x_n)=\frac{\frac{1}{x_if_{x_i}}+\frac{1}{x_jf_{x_j}}}{-\frac{f_{x_ix_i}}{f_{x_i}^2}+\frac{2f_{x_ix_j}}{f_{x_i}f_{x_j}}-\frac{f_{x_jx_j}}{f_{x_j}^2}},\
\end{equation}
for all $(x_1,\ldots,x_n)\in\mathbb{R}^n_+$, is known as \emph{Hicks elasticity of substitution}
between the inputs $x_i$
and $x_j$, while
the function $A_{ij}:\mathbb{R}^n_+\rightarrow\mathbb{R}$ given by
\[
A_{ij}(x_1,\ldots,x_n)=-\frac{x_1f_{x_1}+\ldots+x_nf_{x_n}}{x_ix_j}\frac{\Delta_{ij}}{\Delta},
\]
for all $(x_1,\ldots,x_n)\in\mathbb{R}^n_+$, is called the \emph{Allen elasticity of substitution} between the inputs $x_i$
and $x_j$, where $\Delta$ is the determinant of the bordered matrix
\[
\left(
  \begin{array}{cccc}
    0 & f_{x_1} & \ldots & f_{x_n} \\
    f_{x_1} & f_{x_1x_1} & \ldots & f_{x_1x_n} \\
    \vdots & \vdots & \vdots & \vdots \\
    f_{x_n} & f_{x_nx_1} & \ldots & f_{x_nx_n} \\
  \end{array}
\right)
\]
and $\Delta_{ij}$ is the co-factor of the element $f_{ij}$ in the determinant $\Delta$. Notice that the determinant $\Delta$, usually called \emph{Allen determinant}, is assumed to be $\neq 0$. 

It is worth mentioning that $H_{ij}=A_{ij}$, provided that $n=2$. Because of this fact, in case of a production function with only two inputs, both $H_{ij}$ and $A_{ij}$ are called \emph{elasticity of substitution} between the two inputs. On the other hand, for $n\geq3$, it is clear that $H_{ij}\neq A_{ij}$. We note that the elasticity of substitution was originally introduced by J.R. Hicks \cite{HIC} in case of two inputs for the purpose of analyzing
changes in the income shares of labor and capital.

A twice differentiable production function $f$ with nowhere zero first partial derivatives is said to satisfy the  \emph{constant elasticity of substitution} (CES) property if there
is a nonzero real constant $\sigma$ such that
\begin{equation}\label{B9}
H_{ij}(x_1,\ldots,x_n)=\sigma,
\end{equation}
for $(x_1,\ldots,x_n)\in\mathbb{R}^n_+$ and $1\leq i\neq j\leq n$.

We remark that B.-Y. Chen \cite{CH} has completely classified homogeneous production functions which
satisfy the CES property, generalizing to an arbitrary number of inputs an earlier result of L. Losonczi \cite{LL} for two inputs. Moreover, the classification has been later extended to the classes of quasi-sum and homothetic production functions in \cite{CH5,CH6}. We remark that quasi-sum production functions are of special interest, they arise in a natural way in the problem of consistent aggregation \cite{AM}. On the other hand, A. Mihai, M.E. Aydin and M. Erg\"{u}t classified quasi-sum and quasi-product production functions by their Allen determinants \cite{AE2,AMih}.

\section{Some classification results}

\begin{thm}\label{T1}
Suppose $f$ is a quasi-product production model having the form (\ref{A6}), such that $F,g_1,\ldots,g_n$ are twice differentiable functions on their domains of definition. Then the following assertions hold.
\begin{enumerate}
  \item[i.] The output elasticity with respect to an input $x_i$ is a constant $k_i$ iff the production model reduces to
\begin{equation}\label{C7}
f(x_1,\ldots,x_n)=A\cdot x_i^{k_i}\cdot\prod_{j\neq i} g^k_j(x_j),
\end{equation}
where $A$ and $k$ are real constants with $A>0$ and $k\neq0$.
\item[ii.] The output elasticity is constant with respect to all inputs iff $f$ reduces to the generalized CD production function defined by \eqref{2}.
  \item[iii.] The production model satisfies the PMRS property  iff it reduces to
  the following homothetic generalized CD production function:
\begin{equation}\label{C8}
f(x_1,\ldots,x_n)=F\left(A\cdot\prod_{i=1}^n x_i^k\right),
\end{equation}
where $A$ and $k$ are real constants with $A>0$ and $k\neq0$.
\item[iv.] If the production model satisfies the PMRS property, then:
\begin{enumerate}
  \item[iv$_1$.] The production hypersurface associated with the quasi-product production model $f$ is non-minimal.
  \item[iv$_2$.] The production hypersurface associated with the quasi-product production model $f$ has null
sectional curvature iff, up to a suitable translation, $f$ reduces to a generalized CD production function given by
\begin{equation}\label{C100}
    f(x_1,\ldots,x_n)=A\cdot\sqrt{x_1\ldots x_n},
\end{equation}
where $A>0$.
\end{enumerate}
\item[v.] The production hypersurface associated with the quasi-product production model $f$  has null Gauss-Kronecker curvature iff, up to a suitable translation, the function $f$ reduces to the one of the following:
\begin{enumerate}
  \item[(a)] a generalized CD production function with CRS;
  \item[(b)] $f(x_1,\ldots,x_n)=A\cdot\ln\left[\exp{(A_1x_1)}\cdot\displaystyle\prod_{j=2}^ng_j(x_j)\right]$, where $A,A_1$ are nonzero real constants;
  \item[(c)] $f(x_1,\ldots,x_n)=F\left(A\cdot\exp{(A_1x_1+A_2x_2)}\cdot\displaystyle\prod_{j=3}^ng_j(x_j)\right)$, where $A$ is a positive constant and $A_1,A_2$ are nonzero real constants;
  \item[(d)] a generalized CES production function with CRS, given by
        \[f(x_1,\ldots,x_n)=\left(\displaystyle\sum_{i=1}^{n}C_ix_i^{\frac{A}{A-1}}\right)^{\frac{A-1}{A}},\] where $A$ is a nonzero real constant, $A\neq1$, and $C_1,\ldots,C_n$ are nonzero real constants;
  \item[(e)] $f(x_1,\ldots,x_n)=A\cdot\ln\left(\displaystyle\sum_{i=1}^{n}B_i\exp(A_ix_i)\right)$,  where $A,A_i,B_i$ are nonzero real constants for $i=1,\ldots,n$.
\end{enumerate}
\item[vi.] The production model satisfies the CES property iff, up to a suitable translation, the function $f$ reduces to the one of the following:
\begin{enumerate}
    \item[(a)] a homothetic generalized CD production function given by
    \[
    f(x_1,\ldots,x_n)=F\left(A\cdot\prod_{i=1}^n x_i^{k_i}\right),
    \]
where $A$ is a positive constant and $k_1,\ldots,k_n$ are nonzero real constants.
    \item[(b)] $f(x_1,\ldots,x_n)=F\left(A\cdot\displaystyle\prod_{i=1}^n\exp\left(A_ix_i^{\frac{\sigma-1}{\sigma}}\right)\right)$,  where $A$ is a positive constant and $A_1,\ldots,A_n,\sigma$ are nonzero real constants, $\sigma\neq 1$;
    \item[(c)] a two-input production function given by \[f(x_1,x_2)=F\left(A\cdot\left(\frac{x_1^{\frac{\sigma-1}{\sigma}}+A_1}{x_2^{\frac{\sigma-1}{\sigma}}+A_2}\right)^{\frac{\sigma}{k}}\right),\] where $A,A_1,A_2,k,\sigma$ are nonzero real constants, $\sigma\neq 1$;
    \item[(d)] a two-input production function given by \[f(x_1,x_2)=F\left(A\cdot\left(\frac{\ln (A_1x_1)}{\ln (A_2x_2)}\right)^{\frac{1}{k}}\right),\] where $A,k$ are nonzero real constants and $A_1,A_2$ are positive constants.
\end{enumerate}
\end{enumerate}
\end{thm}
\begin{proof}
In what follows we will use the notation $u=g_1(x_1)\cdot\ldots\cdot g_n(x_n)$. Then we have
 \begin{equation}\label{E14}
f_{x_i}=uF' \frac{g'_i}{g_i},
\end{equation}
where $F'$ denotes the derivative with respect to the variable $u$ and $g'_i=\frac{{\rm d}g_i}{{\rm d}x_i}$, for $i=1,\ldots,n$.

From (\ref{E14}) we derive
 \begin{equation}\label{E15}
f_{x_ix_i}=u^2F''\left(\frac{g'_i}{g_i}\right)^2+uF'\frac{g''_i}{g_i},\ i=1,\ldots,n
\end{equation}
and
 \begin{equation}\label{E16}
f_{x_ix_j}=u(uF''+F')\frac{g'_ig'_j}{g_ig_j},\ i\neq j.
\end{equation}

i. We first suppose that the output elasticity is a constant $k_i$ with respect to an input $x_i$. Then we derive from (\ref{B4}) that
 \begin{equation}\label{E17}
f_{x_i}=k_i\frac{f}{x_i}.
\end{equation}

By replacing (\ref{A6}) and (\ref{E14}) in (\ref{E17}) we obtain
 \begin{equation}\label{E18}
u\frac{F'}{F}=\frac{k_i}{x_i}\cdot\frac{g_i}{g'_i}.
\end{equation}

The partial derivative of the expression (\ref{E18}) with respect to $x_j$, $j\neq i$, leads to
\[
u\frac{g'_j}{g_j}\frac{(F'+uF'')F-uF'^2}{F^2}=0.
\]
Hence, because $u>0$ and $g'_j\neq 0$, we deduce that
\begin{equation}\label{E19}
\left(\frac{uF'}{F}\right)'=0.
\end{equation}

We obtain now easily that the solution of (\ref{E19}) is
\begin{equation}\label{E20}
F(u)=C\cdot u^{k},
\end{equation}
for some constants $C>0$ and $k\in\mathbb{R}-\{0\}$.
From (\ref{E20}) and (\ref{E18}) we derive
\[
\frac{g'_i}{g_i}=\frac{k_i}{kx_i}
\]
with solution
\begin{equation}\label{E21}
g_i(x_i)=B\cdot x_i^{\frac{k_i}{k}},
\end{equation}
where $B$ is a positive constant.

Finally, combining (\ref{A6}), (\ref{E20}) and (\ref{E21}) we get a function of the form (\ref{C7}), where $A=C\cdot B^k$.

The right-to-left implication follows immediately by straightforward computation.

ii. The proof is clear from i.

iii. Let us assume that $f$ satisfies the PMRS property. Then taking account of (\ref{B5}), (\ref{B6}) and (\ref{E14}) we obtain
\[
x_j\frac{g'_j}{g_j}=x_i\frac{g'_i}{g_i},\ \forall i\neq j.
\]

Therefore we derive that there exists a real constant $k\neq0$ such that
\[
x_i\frac{g'_i}{g_i}=k,\ i=1,\ldots,n,
\]
and we get
\begin{equation}\label{E22}
g_i(x_i)=A_ix_i^k,\ i=1,\ldots,n,
\end{equation}
for some positive constants $A_1,\ldots,A_n$.

From (\ref{A6}) and (\ref{E22}) we derive that
\[
f(x_1,\ldots,x_n)=F\left(A\displaystyle\prod_{i=1}^{n}x_i^k\right),
\]
where \[A=\displaystyle\prod_{i=1}^{n}A_i\] and the conclusion follows.

The right-to-left implication can be easily checked by direct computation.

iv. We assume that the production function given by \eqref{A6} satisfies the PMRS property. Then we deduce
from \eqref{C8} that, denoting $G=F\circ g$, where  $g(t)=A \cdot t^k$, the function $f$ takes the form
\begin{equation}\label{x33}
f(x_1,\ldots,x_n)=G\left(\displaystyle\prod_{i=1}^nx_i\right).
\end{equation}

Therefore we have
\begin{equation}\label{37}
f_{x_i}=\frac{uG'}{x_i},
\end{equation}
\begin{equation}\label{38}
f_{x_ix_i}=\frac{u^2G''}{x_i^2}
\end{equation}
and
\begin{equation}\label{39}
f_{x_ix_j}=\frac{u(G'+uG'')}{x_ix_j},
\end{equation}
where \[u=\displaystyle\prod_{i=1}^nx_i.\]

If the corresponding production hypersurface of $f$ is minimal, then $H=0$ and from (\ref{11b}) we obtain
\begin{equation}\label{36}
\sum_{i=1}^{n}f_{x_ix_i}+\sum_{i\neq j}\left(f_{x_i}^2f_{x_jx_j}-f_{x_i}f_{x_j}f_{x_ix_j}\right)=0.
\end{equation}

Making use of \eqref{37}, \eqref{38} and \eqref{39} in \eqref{36} we get
\begin{equation}\label{40}
u^2G''\sum_{i=1}^{n}\frac{1}{x_i^2}-u^3G'^3\sum_{i\neq j}\frac{1}{x_i^2x_j^2}=0.
\end{equation}

We can see now that the unique solution of the equation (\ref{40}) is $G(u)=constant$, which is clearly a contradiction. Hence the assertion (iv$_1$) follows.

Let us assume now that the  production hypersurface of $f$ has $K_{ij}=0$. Then we derive from (\ref{12}) that
\begin{equation}\label{x32}
f_{x_ix_i}f_{x_jx_j}-f^2_{x_ix_j}=0.
\end{equation}

Making use of \eqref{37}, \eqref{38} and \eqref{39} in (\ref{x32}),
and taking into account that $G'\neq 0$, we obtain
\[
\frac{G''}{G'}=-\frac{1}{2u}.
\]
Therefore we get immediately
\begin{equation}\label{x35}
G(u)=A\sqrt{u}+B
\end{equation}
for some constants $A,B$, with $A\neq0$.

Combining now (\ref{x33}) and (\ref{x35}), we obtain that, modulo a translation, $f$ reduces to a generalized
CD production function having the form (\ref{C100}).
Since the converse can be easily verified by direct computation, the assertion (iv$_2$) follows.

v. We first suppose that the graph hypersurface associated with the quasi-product production model $f$ has vanishing Gauss-Kronecker curvature. Then we derive from (\ref{11}) that
\begin{equation}\label{E23a}
\det(f_{x_ix_j})=0.
\end{equation}

But using \eqref{E15} and \eqref{E16}, we obtain that the Hessian matrix of a composite
function of the form \eqref{A6} has the determinant expressed by \cite{AE2014}
\begin{equation}\label{E23b}
\det(f_{x_ix_j})=(uF')^n\left[\prod_{j=1}^n \left(\frac{g_j'}{g_j}\right)'+\left(1+u\frac{F''}{F'}\right)\sum_{j=1}^n\left(\left(\frac{g_j'}{g_j}\right)^2\cdot\prod_{i\neq j}\left(\frac{g_i'}{g_i}\right)'\right)\right].
\end{equation}

We divide now the proof of the theorem into two main cases: (A) and (B).\\
\emph{Case} (A): $\frac{g_1'}{g_1},\ldots,\frac{g_n'}{g_n}$ are nonconstant.
Then, from (\ref{E23a}) and (\ref{E23b}) we derive
\begin{equation}\label{E23bb}
1+\left(1+u\frac{F''}{F'}\right)\sum_{j=1}^n\frac{\left(\frac{g_j'}{g_j}\right)^2}{\left(\frac{g_j'}{g_j}\right)'}=0.
\end{equation}

We remark that for the above equation to have solution, it is necessary to have \[1+u\frac{F''}{F'}\neq 0\] and \[\displaystyle\sum_{j=1}^n\frac{\left(\frac{g_j'}{g_j}\right)^2}{\left(\frac{g_j'}{g_j}\right)'}\neq0.\] In this case, \eqref{E23bb} reduces to
\begin{equation}\label{VGE1}
\sum_{j=1}^n\frac{\left(\frac{g_j'}{g_j}\right)^2}{\left(\frac{g_j'}{g_j}\right)'}=-\frac{F'}{F'+uF''}.
\end{equation}

By taking the partial derivative of \eqref{VGE1} with respect to $x_i$ and dividing both sides of the derived expression by $\frac{g_i'}{g_i}$, we obtain
\begin{equation}\label{VGE2}
2-\frac{\frac{g_i'}{g_i}\cdot\left(\frac{g_i'}{g_i}\right)''}{\left[\left(\frac{g_i'}{g_i}\right)'\right]^2}=u\cdot \frac{F'F''+u[F'F'''-(F'')^2]}{(F'+uF'')^2}.
\end{equation}

Therefore, after taking the partial derivative of \eqref{VGE2} with respect to $x_j$, with $j\neq i$, and simplifying the derived expression by  $\left(u\cdot\frac{g_j'}{g_j}\right)$ we get
\begin{eqnarray}\label{VGE4}
\frac{F'F''+u[F'F'''-(F'')^2]}{(F'+uF'')^2}&+&u\cdot\frac{2F'F'''+u(F'F^{iv}-F''F''')}{(F'+uF'')^2}\nonumber\\
&-&2u\cdot\frac{[F'F''+u(F'F'''-(F'')^2)](2F''+uF''')}{(F'+uF'')^3}=0.\nonumber
\end{eqnarray}

We remark now that making the substitution
\[
G=\frac{F'F''+u[F'F'''-(F'')^2]}{(F'+uF'')^2},
\]
the above equation reduces to
\[
G+uG'=0,
\]
with solution \[G(u)=\frac{A}{u},\] where $A$ is a real constant. Hence we derive that
\[
\frac{F'F''+u[F'F'''-(F'')^2]}{(F'+uF'')^2}=\frac{A}{u},
\]
which is equivalent to
\begin{equation}\label{VGE5}
\left(-\frac{F'}{F'+uF''}\right)'=\frac{A}{u}.
\end{equation}

From \eqref{VGE5} we find
\begin{equation}\label{VGE6}
-\frac{F'}{F'+uF''}=A\ln u+B,
\end{equation}
for some real constants $A,B$.

We divide now the proof of case (A) into several cases.\\
\emph{Case} (A.1) $A=0$; In this case it follows that $B\neq0$ and \eqref{VGE6} implies that
\begin{equation}\label{E23c}
1+u\frac{F''}{F'}=-\frac{1}{B}.
\end{equation}

On the other hand, from \eqref{VGE1} we deduce that
\begin{equation}\label{E23cc}
\sum_{i=1}^n\frac{\left(\frac{g_i'}{g_i}\right)^2}{\left(\frac{g_i'}{g_i}\right)'}=B
\end{equation}
for any nonzero real constant $B$.

Solving (\ref{E23cc}) we find
\begin{equation}\label{E23d}
g_i(x_i)=A_i(x_i+B_i)^{-k_i},
\end{equation}
for some constants $A_i,B_i,k_i$, with $A_i\neq 0$ and $k_i\neq 0$, $i=1,\ldots,n$, such that $\displaystyle\sum_{i=1}^{n}k_i=B$.

On the other hand, (\ref{E23c}) reduces to
\begin{equation}\label{E23ccc}
\frac{F''}{F'}=-\frac{B+1}{Bu}.
\end{equation}
\emph{Case} (A.1.1): $B=-1$. Then  from \eqref{E23ccc} we derive that \[F(u)=Cu+D,\] for some real constants $C,D$, with $C\neq0$, and combining with (\ref{A6}) and (\ref{E23d}), we conclude that, after a suitable translation, the function $f$ reduces to a
generalized Cobb-Douglas production function with constant return to
scale. Hence we obtain the case (a) of the theorem.\\
\emph{Case} (A.1.2): $B\neq-1$. Then we obtain easily that the solution of \eqref{E23ccc} is
\begin{equation}\label{E23dd}
F(u)=C \cdot{} u^{-\frac{1}{B}}+D,
\end{equation}
for some real constants $C,D$, with $C\neq 0$.

Combining now (\ref{A6}), (\ref{E23d}) and (\ref{E23dd}), after a suitable translation, we conclude that $f$ reduces to the
following function
\[
f(x_1,\ldots,x_n)=A\cdot\prod_{i=1}^{n}x_i^{\frac{k_i}{B}},
\]
where $A$ is a positive constant.  But it is obvious that \[\displaystyle\sum_{i=1}^{n}\frac{k_i}{B}=1\] and therefore we deduce that the above function is a generalized Cobb-Douglas production function with constant return to
scale. Hence we obtain again the case (a) of the theorem.\\
\emph{Case} (A.2) $A\neq0$; In this case it follows that it is necessary to have \[A\ln u+B\neq0\] and we derive from \eqref{VGE6} that
\[
\frac{F''}{F'}=-\frac{1}{u(A\ln u+B)}-\frac{1}{u}.
\]
Hence we obtain
\begin{equation}\label{ddd}
F'(u)=\frac{C}{u(A\ln u+B)^{\frac{1}{A}}},
\end{equation}
where $C$ is a nonzero real constant. Now, from \eqref{ddd}, we get that
\begin{equation}\label{dddd}
F(u)=D(\ln u+E)^{-\frac{1}{A}+1}+F,
\end{equation}
where $D$ is a nonzero real constant and $E,F$ are real constant, provided that $A\neq1$. On the other hand, if $A=1$, then we obtain from  \eqref{ddd} that
\begin{equation}\label{dddd2}
F(u)=C\ln(\ln u+B)+D,
\end{equation}
where $D$ is a real constant.

But we can easily see now that \eqref{VGE2} implies
\begin{equation}\label{VGE200}
2-\frac{\frac{g_i'}{g_i}\cdot\left(\frac{g_i'}{g_i}\right)''}{\left[\left(\frac{g_i'}{g_i}\right)'\right]^2}=A,
\end{equation}
for $i=1,\ldots,n$.\\
\emph{Case} (A.2.1) A=2; In this case we obtain from \eqref{dddd}
\begin{equation}\label{ddddd}
F(u)=D\sqrt{\ln u+E}+F,
\end{equation}
and from
\eqref{VGE200} it follows that
\[
\left(\frac{g_i'}{g_i}\right)''=0.
\]
Hence we derive
\begin{equation}\label{VGE20}
g_i(x_i)=\exp(a_ix_i^2+b_ix_i+c_i),\ i=1,\ldots,n,
\end{equation}
where $a_i,b_i,c_i$ are real constants. Because $g_i'\neq 0$ on  $\mathbb{R}_+$, it follows that the constants  $a_i$ and $b_i$ must satisfy the following conditions: $a_ib_i\geq0$ and $a_i^2+b_i^2\neq 0$, for $i=1,\ldots,n$. Combining now \eqref{A6}, \eqref{ddddd} and \eqref{VGE20} we deduce that $f$ takes the form
\begin{equation}\label{VGE21}
f(x_1,\ldots,x_n)=D\cdot\sqrt{\sum_{i=1}^{n}A_i(x_i+B_i)^2+E}+F,
\end{equation}
for some constants $A_i,B_i,E,F$ with $A_i\neq 0$. Now, making use of Lemma \ref{L1}(i), it is direct to verify that the production hypersurface associated with the production function given by \eqref{VGE21} has vanishing Gauss-Kronecker curvature if and only if $E=0$. Hence, after a suitable translation we obtain the case (d) of the theorem with $A=2$.\\
\emph{Case} (A.2.2) $A\neq2$; We deduce from \eqref{VGE200} that
\begin{equation}\label{VGE22}
\frac{\frac{g_i'}{g_i}\cdot\left(\frac{g_i'}{g_i}\right)''}{\left[\left(\frac{g_i'}{g_i}\right)'\right]^2}=2-A.
\end{equation}

Denoting $h_i=\frac{g_i'}{g_i}$, we obtain from \eqref{VGE22}
\begin{equation}\label{VGE23}
\left(\frac{h_i}{h'_i}\right)'=A-1
\end{equation}
\emph{Case} (A.2.2.i.) $A=1$; Then from \eqref{VGE23} we conclude that \[\frac{h_i}{h'_i}=\bar{A}_i,\] where $\bar{A}_i$ is a nonzero real constant ($i=1,\ldots,n$) and we deduce
\[
\frac{g_i'}{g_i}=D_i\exp(A_ix_i),\ i=1,\ldots,n,
\]
where $D_i$ is a real constant and $A_i=(\bar{A}_i)^{-1}$ for $i=1,\ldots,n$. Now we can derive immediately that
\begin{equation}\label{VGE24}
g_i(x_i)=C_i\exp\left(B_i\exp(A_ix_i)\right),\ i=1,\ldots,n,
\end{equation}
where $B_i$ is a nonzero constant and $C_i$ is a positive constant.

Combining now \eqref{A6}, \eqref{dddd2} and \eqref{VGE24} we deduce that $f$ takes the form
\begin{equation}\label{VGE25}
f(x_1,\ldots,x_n)=C\ln\left(\displaystyle\sum_{i=1}^{n}B_i\exp(A_ix_i)+B\right)+D
\end{equation}
for some nonzero constants $C,A_i,B_i$, $ i=1,\ldots,n$, and real constants $B,D$. Now, making use of Lemma \ref{L1}(i), it follows by direct computation that the production hypersurface associated with the production function given by \eqref{VGE25} has vanishing Gauss-Kronecker curvature if and only if $B=0$. Hence, after a suitable translation, we obtain the case (e) of the theorem.\\
\emph{Case} (A.2.2.ii.) $A\neq1$; Then from \eqref{VGE23} we derive that \[\frac{h_i'}{h_i}=\frac{1}{(A-1)x_i+A_i},\] where $A_i$ is a real constant ($i=1,\ldots,n$) and we obtain
\begin{equation}\label{VGE26}
\frac{g_i'}{g_i}=B_i[(A-1)x_i+A_i]^{\frac{1}{A-1}},\ i=1,\ldots,n,
\end{equation}
where $B_i$ is a nonzero real constant, $i=1,\ldots,n$.

From \eqref{VGE26} we obtain
\begin{equation}\label{VGE30}
g_i(x_i)=C_i\exp\left(\frac{B_i}{A}[(A-1)x_i+A_i]^{\frac{A}{A-1}}\right),\ i=1,\ldots,n,
\end{equation}
where $C_i$ is a positive constant, $i=1,\ldots,n$.

Combining \eqref{A6}, \eqref{dddd} and \eqref{VGE30} we deduce that $f$ takes the form
\begin{equation}\label{VGE31}
f(x_1,\ldots,x_n)=D\left(\sum_{i=1}^{n}\frac{B_i}{A}[(A-1)x_i+A_i]^{\frac{A}{A-1}}+B\right)^{\frac{A-1}{A}}+F
\end{equation}
for some constants $D,A_i,B_i,B,F$, with $D>0$ and $B_i\neq0$. Now, using Lemma \ref{L1}(i), we can easily verify that the production hypersurface associated with the production function given by \eqref{VGE31} has vanishing Gauss-Kronecker curvature if and only if $B=0$. Hence, after a suitable translation we obtain the case (d) of the theorem.\\
\emph{Case} (B): at least one of $\frac{g_1'}{g_1},\ldots,\frac{g_n'}{g_n}$ is constant. Without loss of generality, we may assume that $\frac{g_1'}{g_1}=A_1$, where $A_1$ is a nonzero real constant. Then we derive that
\begin{equation}\label{VGE32}
g_1(x_1)=B_1\exp(A_1x_1),
\end{equation}
where $B_1$ is a positive constant. Then  (\ref{E23a}) and (\ref{E23b}) imply
\begin{equation}\label{VGE33}
\left(1+u\frac{F''}{F'}\right)\cdot\prod_{i=2}^{n}\left(\frac{g'_i}{g_i}\right)'=0.
\end{equation}

From \eqref{VGE33} we derive that either \[1+u\frac{F''}{F'}=0\] or \[\prod_{i=2}^{n}\left(\frac{g'_i}{g_i}\right)'=0.\] But in the first case we get immediately that
\begin{equation}\label{VGE34}
F(u)=A\ln u+B,
\end{equation}
where $A,B$ are real constants, $A\neq0$. Hence, from \eqref{A6}, \eqref{VGE32} and \eqref{VGE34} we deduce that, after a suitable translation, we obtain the case (b) of the theorem.

On the other hand, in the second case we may assume without loss of generality that $\left(\frac{g'_2}{g_2}\right)'=0$. Hence we get
\begin{equation}\label{VGE35}
g_2(x_2)=B_2\exp(A_2x_2),
\end{equation}
where  $A_2$ is a nonzero real constant and $B_2$ is a positive constant.

Combining now \eqref{A6}, \eqref{VGE32} and \eqref{VGE35},  we obtain the case (c) of the theorem.

Conversely, we can verify by direct computation that all of the production hypersurfaces defined by the production functions in cases (a)-(e) of the theorem have vanishing Gauss-Kronecker curvature.

vi. We first assume that the production function satisfies the constant elasticity of substitution property. Then using
\eqref{B7}, \eqref{E14}, \eqref{E15} and \eqref{E16}  in  \eqref{B9} we obtain
\begin{eqnarray}\label{41}
&&\sigma x_ix_ju^3(F')^3\left[2\left(\frac{g'_i}{g_i}\right)^2\left(\frac{g'_j}{g_j}\right)^2-\frac{g''_i}{g_i}\left(\frac{g'_j}{g_j}\right)^2-
\frac{g''_j}{g_j}\left(\frac{g'_i}{g_i}\right)^2\right]=\nonumber\\
&&=u^3(F')^3\frac{g'_i}{g_i}\frac{g'_j}{g_j}\left(x_i\frac{g'_i}{g_i}+x_j\frac{g'_j}{g_j}\right)
\end{eqnarray}
and taking into account that $x_i>0$, $i=1,\ldots,n$, and $F,g_1,\ldots,g_n$ are
positive functions with nowhere zero first derivatives, we get from \eqref{41}:
\begin{equation}\label{42}
\sigma\left[2-\frac{g_ig_i''}{(g_i')^2}-\frac{g_jg_j''}{(g_j')^2}\right]=\frac{1}{x_i}\frac{g_i}{g'_i}+\frac{1}{x_j}\frac{g_j}{g'_j}.
\end{equation}

Now, it is easy to see that \eqref{42} can be written as
\begin{equation}\label{43}
\frac{1}{x_i}\frac{g_i}{g'_i}-\sigma\left(\frac{g_i}{g_i'}\right)'+\frac{1}{x_j}\frac{g_j}{g'_j}-\sigma\left(\frac{g_j}{g_j'}\right)'=0,
\end{equation}
for $1\leq i\neq j\leq n$.

Next, we can divide the proof into two separate cases.\\
\emph{Case} A. $n\geq3$. Then it is obvious that \eqref{43} implies
\begin{equation}\label{44}
\frac{1}{x_i}\frac{g_i}{g'_i}-\sigma\left(\frac{g_i}{g_i'}\right)'=0,\ i=1,\ldots,n,
\end{equation}
and we derive easily that the solution of \eqref{44} is
\begin{equation}\label{45}
g_i(x_i)=\left\{
           \begin{array}{ll}
             B_i\exp\left(C_ix_i^{\frac{\sigma-1}{\sigma}}\right), & \hbox{$\sigma\neq1$} \\
             B_ix_i^{C_i}, & \hbox{$\sigma=1$}
           \end{array}
         \right.,
\end{equation}
for some positive constants $B_i$ and nonzero real constants $C_i$, $i=1,\ldots,n$. Combining now \eqref{A6} and \eqref{45} we get cases (a) and (b) of the theorem.\\
\emph{Case} B. $n=2$. Then it follows from \eqref{43} that
\begin{equation}\label{46}
\left\{
           \begin{array}{ll}
            \frac{1}{x_1}\frac{g_1}{g'_1}-\sigma\left(\frac{g_1}{g_1'}\right)'=k \\
            \frac{1}{x_2}\frac{g_2}{g'_2}-\sigma\left(\frac{g_2}{g_2'}\right)'=-k
           \end{array}
         \right.,
\end{equation}
for some constant $k$. We remark now that, if $k=0$, then we obtain immediately the cases (a) and (b) of the theorem with $n=2$. Next we consider that $k\neq0$. Then solving \eqref{46}, we derive
\begin{equation}\label{47}
g_1(x_1)=\left\{
           \begin{array}{ll}
             B_1\left(\frac{k}{\sigma-1}x_1^{\frac{\sigma-1}{\sigma}}+C_1\right)^{\frac{\sigma}{k}}, & \hbox{$\sigma\neq1$} \\
             B_1(k\ln x_1+C_1)^{\frac{1}{k}}, & \hbox{$\sigma=1$}
           \end{array}
         \right.
\end{equation}
and
\begin{equation}\label{48}
g_2(x_2)=\left\{
           \begin{array}{ll}
             B_2\left(\frac{k}{\sigma-1}x_2^{\frac{\sigma-1}{\sigma}}+C_2\right)^{-\frac{\sigma}{k}}, & \hbox{$\sigma\neq1$} \\
             B_2(k\ln x_2+C_2)^{-\frac{1}{k}}, & \hbox{$\sigma=1$}
           \end{array}
         \right.
\end{equation}
for some constants $B_1,B_2,C_1,C_2$. Combining now \eqref{A6}, \eqref{47} and \eqref{48} we get cases (c) and (d) of the theorem.

The converse follows easily by direct computation.
\end{proof}

From \cite[Theorem 3.3]{FW} and Theorem \ref{T1} we obtain easily the following result for product production models.

\begin{cor}\label{COR2}
Suppose $f$ is a product production model having the form (\ref{A5}), such that $g_1,\ldots,g_n$ are twice differentiable functions on their domains of definition. Then:
\begin{enumerate}
  \item[i.] The output elasticity with respect to an input $x_i$ is a constant $k_i$  iff the production model reduces to
\[
f(x_1,\ldots,x_n)=A\cdot x_i^{k_i}\cdot\prod_{j\neq i} g_j(x_j),
\]
where $A$ and $k$ are real constants with $A>0$ and $k\neq0$.
\item[ii.] The output elasticity is constant with respect to all inputs iff $f$ reduces to a generalized CD production function defined by \eqref{2}.
  \item[iii.] The production model satisfies the PMRS property  iff it reduces to
  a generalized CD production function defined by
\[
f(x_1,\ldots,x_n)=A\cdot\prod_{i=1}^n x_i^k,
\]
where $A$ and $k$ are real constants with $A>0$ and $k\neq0$.
\item[iv.] If the product production model satisfies the PMRS property, then:
\begin{enumerate}
  \item[iv$_1$.] The production hypersurface associated with the product production model is non-minimal.
  \item[iv$_2$.] The production hypersurface associated with the product production model $f$ has null
sectional curvature if and only if, up to a suitable translation, $f$ reduces to a generalized CD production function defined by \eqref{C100}.
\end{enumerate}
\item[v.] The production hypersurface associated with the product production model $f$ has null Gauss-Kronecker curvature if and only if, up to a suitable translation, $f$ reduces to the one of the following:
\begin{enumerate}
  \item[(a)] a generalized CD production function with CRS;
  \item[(b)] $f(x_1,\ldots,x_n)=A\cdot\exp{(A_1x_1+A_2x_2)}\cdot\displaystyle\prod_{j=3}^ng_j(x_j)$, where $A$ is a positive constant and $A_1,A_2$ are nonzero real constants.
\end{enumerate}
\item[vi.] The production model satisfies the CES property iff, up to a suitable translation, $f$ reduces to the one of the following:
\begin{enumerate}
    \item[(a)] a generalized CD production function given by \eqref{2}.
    \item[(b)] $f(x_1,\ldots,x_n)=A\cdot\displaystyle\prod_{i=1}^n\exp\left(A_ix_i^{\frac{\sigma-1}{\sigma}}\right)$, $\sigma\neq 1$,
    where $A$ is a positive constant and $A_1,\ldots,A_n,\sigma$ are nonzero real constants, $\sigma\neq 1$;
    \item[(c)] a two-input production function given by \[f(x_1,x_2)=A\cdot\left(\frac{x_1^{\frac{\sigma-1}{\sigma}}+A_1}{x_2^{\frac{\sigma-1}{\sigma}}+A_2}\right)^{\frac{\sigma}{k}},\]
    where $A,k,\sigma,A_1,A_2$ are nonzero real constants, $\sigma\neq 1$;
    \item[(d)] a two-input production function given by \[f(x_1,x_2)=A\cdot\left(\frac{\ln (A_1x_1)}{\ln (A_2x_2)}\right)^{\frac{1}{k}},\] where $A,k$ are nonzero real constants and $A_1,A_2$ are positive constants.
\end{enumerate}
\item[viii.] The production hypersurface associated with the product production model $f$  is flat iff, up to a suitable translation, $f$ reduces to the one of the following:
\begin{enumerate}
  \item[(a)] $f(x_1,\ldots,x_n)=A\cdot\displaystyle\prod_{i=1}^n\exp\left(C_ix_i\right)$, where $A$ is a positive constant and $C_1,\ldots,C_n$ are nonzero real constants;
  \item[(b)] A generalized Cobb-Douglas production function given by \[f(x_1,\ldots,x_n)=A\sqrt{x_1\cdot\ldots\cdot x_n},\] where $A$ is a positive constant.
\end{enumerate}
\end{enumerate}
\end{cor}

From Corollary \ref{COR2} we obtain the following results concerning Spillman-Mitscherlich and transcendental
production functions.

\begin{cor}\label{C2} Suppose $f$ is a Spillman-Mitscherlich  production model given by \eqref{A1}. Then the following assertions hold.
\begin{enumerate}
  \item[i.] The output elasticity cannot be constant with respect to any input $x_i$, $i=1,\ldots,n$.
    \item[ii.] $f$ does not satisfy the PMRS property.
  \item[iii.] $f$  does not satisfy the CES property.
  \item[iv.] The production hypersurface associated with the Spillman-Mitscherlich production model $f$  has non-vanishing Gauss-Kronecker curvature.
  \item[v.] The production hypersurfaces associated with the Spillman-Mitscherlich production model $f$ is non-flat.
\end{enumerate}
\end{cor}

\begin{cor}\label{C3} Suppose $f$ is a transcendental production model defined by \eqref{A2}. Then:
\begin{enumerate}
  \item[i.] The output elasticity is constant with respect to an input $x_i$ iff $b_i=0$.
    \item[ii.] The output elasticity is constant with respect to all inputs iff $b_1=\ldots=b_n=0$.
     \item[iii.] $f$ satisfies the PMRS property iff $a_1=\ldots=a_n\neq0$ and $b_1=\ldots=b_n=0$. Moreover,
in this case, the production hypersurface associated with the transcendental production model $f$ cannot be minimal, but it has vanishing
sectional curvature if and only if $a_1=\ldots=a_n=\frac{1}{2}$.
  \item[iv.] The production hypersurface associated with the transcendental production model $f$ has vanishing Gauss-Kronecker curvature if and only if one of the following situations occurs:
\begin{enumerate}
  \item[(a)] $a_1+\ldots+a_n=1$ and $b_1=\ldots=b_n=0$;
  \item[(b)] There are two different indices $i,j\in\{1,\ldots,n\}$ such that $a_i=a_j=0$.
\end{enumerate}
  \item[v.] $f$ satisfies the CES property if and only if\\ $b_1=\ldots=b_n=0$;
  \item[vi.] The production hypersurfaces associated with the transcendental production model $f$ is flat if
and only if one of the following situations occurs:
\begin{enumerate}
  \item[(a)] $a_1=\ldots=a_n=0$;
  \item[(b)] $a_1=\ldots=a_n=\frac{1}{2}$ and $b_1=\ldots=b_n=0$.
\end{enumerate}
\end{enumerate}
\end{cor}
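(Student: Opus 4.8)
The plan is to observe that the transcendental production function \eqref{A2} is exactly a product production function \eqref{A5} whose factors are $g_i(x_i)=x_i^{a_i}\exp(b_ix_i)$ (the overall constant $A$ being absorbed into, say, $g_1$), and then to read off each assertion from the corresponding part of Corollary \ref{COR2}. The single computation driving everything is the logarithmic derivative
\[
\frac{g_i'(x_i)}{g_i(x_i)}=\frac{a_i}{x_i}+b_i=\frac{a_i+b_ix_i}{x_i},
\]
which shows that $g_i'$ vanishes nowhere on $\mathbb R_+$ under the sign conventions on $a_i,b_i$, so the hypotheses of Corollary \ref{COR2} are met, and that $\frac{g_i'}{g_i}$ is constant if and only if $a_i=0$. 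First I would record that $\ln g_i(x_i)=a_i\ln x_i+b_ix_i$ up to an additive constant; this recasts the question ``which canonical form does $g_i$ realize'' as a comparison inside the span of $\{\ln x_i,\,x_i\}$.

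For parts (i)--(iii) the computation is immediate. Since $f_{x_i}=f\,g_i'/g_i$, the elasticity \eqref{B4} equals $E_{x_i}=x_i\,g_i'/g_i=a_i+b_ix_i$, which is constant iff $b_i=0$ (with value $a_i$), giving (i) and hence (ii). For (iii), Corollary \ref{COR2}(iii) forces $g_i(x_i)=A_ix_i^{k}$ with a common nonzero $k$; comparing with $x_i^{a_i}\exp(b_ix_i)$ yields $b_i=0$ and $a_1=\ldots=a_n=k\neq0$. The two refinements then follow from Corollary \ref{COR2}(iv): non-minimality is automatic, while vanishing sectional curvature singles out $f=A\sqrt{x_1\cdots x_n}$, i.e. the common exponent $k=\frac{1}{2}$, so $a_1=\ldots=a_n=\frac{1}{2}$.

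For the curvature statements (iv)--(vi) I would match the finitely many canonical forms of Corollary \ref{COR2}(v),(viii),(vi) against $x_i^{a_i}\exp(b_ix_i)$. For the Gauss--Kronecker case, the Cobb--Douglas-with-constant-return branch forces all $b_i=0$ with $\sum a_i=1$ (branch (a)), whereas the branch with two exponential factors $\exp(A_1x_1+A_2x_2)$ forces two indices with $a_i=a_j=0$ (branch (b)); here it is convenient that Corollary \ref{COR2}(v) already lists only these two branches, the product case $F=\mathrm{id}$ having truncated the five branches of Theorem \ref{T1}(v). For flatness I match the two forms of Corollary \ref{COR2}(viii): the pure-exponential product $\prod\exp(C_ix_i)$ gives $a_1=\ldots=a_n=0$, and $A\sqrt{x_1\cdots x_n}$ gives $a_1=\ldots=a_n=\frac{1}{2}$, $b_1=\ldots=b_n=0$. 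For the CES property, rather than testing all four forms of Corollary \ref{COR2}(vi) I would substitute $g_i/g_i'=x_i/(a_i+b_ix_i)$ directly into the defining relation \eqref{43}, so that the $i$-th summand becomes
\[
\frac{1}{x_i}\frac{g_i}{g_i'}-\sigma\left(\frac{g_i}{g_i'}\right)'=\frac{a_i(1-\sigma)+b_ix_i}{(a_i+b_ix_i)^2},
\]
which, being a linear-over-quadratic function of $x_i$, is constant only when $b_i=0$; hence the CES property holds iff $b_1=\ldots=b_n=0$.

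The main obstacle is the matching step: for each exotic canonical form in Corollary \ref{COR2} one must confirm that $x_i^{a_i}\exp(b_ix_i)$ cannot realize it except in the degenerate way claimed. This is where the reduction $\ln g_i=a_i\ln x_i+b_ix_i$ pays off, since the competing profiles---$\exp(A_ix_i^{(\sigma-1)/\sigma})$ with exponent $(\sigma-1)/\sigma\notin\{0,1\}$, the two-input fractions built from $x_i^{(\sigma-1)/\sigma}+A_i$, and the logarithmic forms built from $\ln(A_ix_i)$---have logarithms lying outside $\mathrm{span}\{\ln x_i,\,x_i\}$ whenever the relevant additive constants are nonzero, so they force $b_i=0$ and collapse onto the Cobb--Douglas branch. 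Carrying out this linear-independence bookkeeping cleanly for each case is the only genuinely delicate point; for the CES property I would avoid it entirely via the direct substitution above, and for Gauss--Kronecker curvature I would rely on the fact that the product case already reduces Corollary \ref{COR2}(v) to just two branches.
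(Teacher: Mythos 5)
Your proposal is correct and follows essentially the same route as the paper, which obtains Corollary \ref{C3} precisely by specializing Corollary \ref{COR2} to the product factors $g_i(x_i)=x_i^{a_i}\exp(b_ix_i)$ and matching the canonical forms branch by branch; your computations ($E_{x_i}=a_i+b_ix_i$, the forced collapse onto Cobb--Douglas when $b_i=0$, and the two-branch analysis for the Gauss--Kronecker and flatness cases) are exactly the implicit verifications. Your only deviation, substituting $g_i/g_i'=x_i/(a_i+b_ix_i)$ directly into \eqref{43} for part (v) instead of matching the four forms of Corollary \ref{COR2}(vi), is a harmless shortcut that reuses the paper's own equation from the proof of Theorem \ref{T1} and reaches the same conclusion.
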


\section*{Acknowledgments}
This research project was supported by a grant from the "Research Center of the Female Scientific and Medical Colleges", Deanship of Scientific Research, King Saud University.

Haila ALODAN\\
Department of Mathematics,\\
King Saud University,\\
Riyadh 11495, Saudi Arabia\\
E-mail address: halodan1@ksu.edu.sa\\

Bang-Yen CHEN\\
Department of Mathematics,\\
Michigan State University,\\
East Lansing, Michigan
48824--1027, USA\\
E-mail address: chenb@msu.edu\\ \\

Sharief DESHMUKH\\
Department of Mathematics,\\
King Saud University,\\
Riyadh 11451, Saudi Arabia\\
E-mail address: shariefd@ksu.edu.sa\\

Gabriel Eduard V\^{I}LCU$^{1,2}$ \\
$^1$University of Bucharest, Faculty of Mathematics and Computer Science,\\
Research Center in Geometry, Topology and Algebra,\\
Str. Academiei 14, Sector 1,\\
Bucure\c sti 70109, Romania\\
E-mail address: gvilcu@gta.math.unibuc.ro\\
$^2$Petroleum-Gas University of Ploie\c sti,\\
Department of Mathematical Modelling, Economic Analysis and Statistics,\\
Bd. Bucure\c sti 39, Ploie\c sti 100680, Romania\\
E-mail address: gvilcu@upg-ploiesti.ro
\end{document}